\newtheorem{lemma}{LEMMA}[section]
\newtheorem{proposition}[lemma]{PROPOSITION}
\newtheorem{theorem}[lemma]{THEOREM}
\newtheorem{remarks}[lemma]{REMARKS}
\newtheorem{assumption}[lemma]{ASSUMPTION}
\newcommand{\nat}{\mathbbm{N}}
\renewcommand{\a}{\alpha}
\renewcommand{\b}{\beta}
\newcommand{\g}{\gamma}
\newcommand{\ve}{\varepsilon}
\newcommand{\on}{\quad\text{ on }}
\newcommand{\und}{\quad\mbox{ and }\quad}
\newcommand{\inv}{^{-1}}
\newcommand{\ov}{\overline}
\newcommand{\W}{\mathcal W}  
\newcommand{\C}{\mathcal C}
\renewcommand{\H}{{\mathcal H}}
\newcommand{\B}{\mathcal B}
\newcommand{\M}{\mathcal M}
\newcommand{\itemframe}%
{\setlength{\parskip}{10pt}\begin{enumerate} \setlength{\topsep}{10pt}%
\setlength{\itemsep}{15pt}\setlength{\parsep}{5pt}}
\newcommand{\vx}{\ve_x}
\newcommand{\uc}{{U^c}}
\newcommand{\vc}{{V^c}}
\newcommand{\kap}{\operatorname*{cap}}
\newcommand{\kapi}{\operatorname*{{cap_\ast}}}
\newcommand{\kapo}{\operatorname*{{cap^\ast}}}
\newcommand{\es}{E_{\mathbbm P}}
 \title{H\"older continuity of harmonic functions for Hunt processes with Green  function
}
\author{Wolfhard Hansen} 
\date{}
\begin{document}
\maketitle

\begin{abstract}
Let $(X,\mathcal W)$ be a balayage space, $1\in \mathcal W$, or -- equivalently -- let $\mathcal W$
be the set of excessive functions of a Hunt process on a locally compact space~$X$ with countable base such that
$\mathcal W$  separates points, every function in $\mathcal W$ is the supremum of its
continuous minorants and there exist  strictly positive continuous       $u,v\in \mathcal W$ 
such that $u/v\to 0$  at infinity. We suppose that there is a~Green function 
$G>0$ for $X$, a metric $\rho$ on $X$ and a   decreasing function $g\colon[0,\infty)\to (0,\infty]$ 
having the doubling property and a mild upper decay  such that $G\approx g\circ\rho$
and the capacity of balls of radius $r$ is approximately $1/g(r)$.

It is shown that bounded harmonic functions are H\"older continuous, if the constant function $1$ is harmonic
and jumps out of balls admit a polynomial estimate. The latter is proven if scaling invariant Harnack inequalities hold.

  Keywords:    Hunt process; balayage space; L\'evy process;   Green
  function; harmonic function, H\"older continuity

  MSC:   31D05, 60J25, 60J45, 60J65, 60J75.
 \end{abstract}

\section{Setting} 
Our basic setting will be  as in \cite{HN-harnack} to which we refer for further details
and   citations:
Let $X$ be a locally compact space with countable base.
Let $\C(X)$ denote the set of all continuous real functions on $X$
and let $\B(X)$ be the set of all Borel measurable numerical functions on $X$.
The set of all (positive) Radon measures on $X$ will be denoted by $\M(X)$.

Moreover, let $\W$ be a convex cone of positive lower 
semicontinuous numerical functions on~$X$ such that $1\in \W$ and $(X,\W)$ is a balayage space. 
In particular, the following holds: 
 
\begin{itemize} 
\item[\rm (C)]
 $\W$ separates the points of $X$, 
\[
              w=\sup\{v\in\W\cap \C(X)\colon v\le w\} \qquad \mbox{ for every } w\in \W,
\]
and there are strictly positive $u,v\in\W\cap \C(X)$ such that $u/v\to 0$  at~infinity.   
\end{itemize} 
There exists a Hunt process $\mathfrak X$ 
on $X$ such that $\W$ is the set $\es$ of excessive functions for the transition semigroup 
$\mathbbm P=(P_t)_{t>0}$ of $\mathfrak X$, 
that is,
\[
             \W=\{v\in \B^+(X)\colon \sup\nolimits_{t>0} P_tv=v\}.
\]
(Conversely, given any sub-Markov right-continuous semigroup $\mathbbm P=(P_t)_{t>0}$  on~$X$
such that (C) is satisfied by its convex cone $\es$ of excessive functions, $(X,\es)$ is a~balayage space, 
and $\mathbbm P$ is the transition semigroup of a Hunt process.)
 
For every subset $A$ of $X$, we have reduced functions  $R_u^A$, $u\in \W$, and reduced measures
$\vx^A$, $x\in X$, defined by
\[
          R_u^A:=\inf\{ v\in \W\colon v\ge u\mbox{ on } A\} \und  \int u\,d\vx^A=R_u^A(x).
\]
If $A$ is  a Borel set, then  
\begin{equation}\label{connection}
               R_1^A(x)= P^x[T_A<\infty], \qquad x\in X,
\end{equation} 
where  $T_A(\omega):=\inf \{t\ge 0\colon X_t(\omega)\in A\}$
and, for every Borel   set $B$ in
 $X$, 
\[
    \vx^A(B)=P^x[X_{T_A}\in B; T_A<\infty].
\]

For every open set $U$ in $X$, let $\H^+(U)$ denote the set of all 
functions $h\in \B^+(X)$ which are \emph{harmonic on~$U$} (in the sense of \cite{BH}), that is,
such that $h|_U\in \C(U) $  and
\begin{equation}\label{mv}
   \vx^\vc(h):=\int h\,d\vx^\vc=h(x) \qquad
\mbox{ if  }  V\mbox{ is open and } x\in V\subset
\subset U.
\end{equation} 
Analogously, we define the set $\H_b(U)$ of all bounded functions which are harmonic on $U$
and note that, given $h\in \B_b(X)$,  already (\ref{mv}) implies that 
$h|_U\in \C(U)$. 

We have the following  sheaf property: If $U_i$, $i\in I$, are open sets in $X$, then
\[ 
                             \bigcap\nolimits_{i\in I}  \H^+(U_i)= \H^+\bigl(\bigcup\nolimits_{i\in I} U_i\bigr).
\]
In fact, given an open set $U$ in $X$, a function $h\in \B^+(X)$ which is continuous on $U$ is already contained
in $\H^+(U)$, if, for every $x\in U$, there exists a fundamental system of relatively compact open neighborhoods $V$ of $x$ 
in $U$ such that $\vx^\vc(h)=h(x)$. 
        
Let us fix once and for all a point $x_0\in X$. In order to discuss H\"older continuity of bounded harmonic functions 
at $x_0$ we suppose the following (for an additional, later assumption see (\ref{jump-m})).

\begin{assumption}\label{main-ass} 
We have  a Borel measurable function $G\colon X\times X\to (0,\infty]$ with $G=\infty$
on the diagonal such that the following hold.
\begin{itemize} 
\item [\rm (i)] 
   For every $y\in X$,  $ G(\cdot,y)$ is a potential which is harmonic on $X\setminus \{y\}$.   
\item [\rm (ii)] 
For every potential $p$ on $X$, there exists $\mu\in \M(X)$ such that\\[-2mm]
\[
p=G\mu:=\int G(\cdot,y)\,d\mu(y).
\]
 \item [\rm (iii)]
There are constants $a_0\ge 0$ and  $c_1\ge 1$ such that, for all $a>a_0$,
\[     
             R_1^{\{G(\cdot,x_0)>a\}} \ge c_1\inv\,  \frac {G(\cdot, x_0)} a  \on \{G(\cdot,x_0)\le a\}.
\] 
\item[\rm (iv)] There is a~metric $\rho$
for $X$, a~decreasing function $g\colon [0,\infty)\to (0,\infty]$ and constants $c,c_D\in [1,\infty)$,          
$\a_0,\eta_0\in (0,1)$  such that, for every $r>0$, 
\[
g(r/2)\le c_D g(r),\qquad  g(r)\le \eta_0 g(\a_0 r) \und c\inv g\circ \rho\,  \le\,  G \, \le\,  c\, g\circ \rho.
\]
\end{itemize} 
\end{assumption}

 \begin{remarks}\label{g-small}
 {\rm 
1. Property (iv) implies that $g(0)=\lim_{r\to 0} g(r)=\infty$ and that, for \emph{any $\eta>0$}, there exists $\a\in (0,1)$ such that 
$g(r)\le \eta g(\a r)$ for every $r>0$ (choose $k\in\nat$ with $\eta_0^k\le \eta$ and take $\a:=\a_0^k$).

2. For 
applications leading to intrinsic H\"older continuity of bounded harmonic functions we recall the following 
(see \cite[Appendix]{H-liouville-wiener}).
Suppose that we have a function $G\colon X\times X\to (0,\infty]$ with  $G=\infty$
on the diagonal such that  (i) holds,  each potential $G(\cdot,y)$, $y\in X$, is bounded at infinity,
and $G$ has the triangle property, that is,  there exists $C>0$ such that
\[
         G(x,z)\wedge G(y,z)\le C G(x,y), \qquad x,y,z\in X.
\]
Then there exist  a~metric~$d$ for $X$ and $\g>0$ such that 
\[
                    G\approx d^{-\g},
\]
which clearly implies  (iv) with $g(r):=r^{-\g}$ and $d$ in place of $\rho$ (conversely, (iv)~implies that
$G$ has the triangle property).
  }
 \end{remarks}

Inner capacities for open sets $U$ in $X$ are defined by 
\begin{equation}\label{inner-cap}
 \kapi U:=\sup\bigl\{\|\mu\|\colon  \mu\in\M(X), \ \mu(X\setminus U)=0,\ G\mu\le 1\bigr\}
\end{equation} 
and outer capacities for arbitrary sets $A$ in $X$ by
\begin{equation}\label{outer-cap}
 \kapo A:=\inf\bigl\{ \kapi U\colon U\mbox{ open neighborhood of } A\bigr\}.
\end{equation} 
Obviously, $\kapo A=\kapi A$, if $A$ is open. If $\kapi A=\kapo A$, we may simply write $\kap A$
and speak of the capacity of $A$.

For   $r>0$, let
\[
           B(r):=\{x\in X\colon \rho(x,x_0)<r\},
\]
and let $R_0$ denote the supremum of all $r>0$ such that $B(r)$ is relatively compact 
and $cg(r)>a_0$. Then $0<R_0\le \infty$. Let
\[
          c_0:=c^3c_D c_1.  
\]

\begin{lemma}\label{cap-balls} 
 For all $0<r<R_0$,
\[
        c_0\inv g(r)\inv\le \kap B(r) \le c g(r)\inv.
\]
\end{lemma}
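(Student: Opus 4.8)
The plan is to establish the two inequalities separately: the upper bound by a single evaluation of $G\mu$ at $x_0$, and the lower bound by realising the reduced function $R_1^A$ of a suitable set $A=\{G(\cdot,x_0)>a\}$ as $G\mu$ and estimating $\|\mu\|$ from below by means of Assumption~\ref{main-ass}(iii). For the upper bound, recall that $\kap B(r)=\kapi B(r)$ since $B(r)$ is open; take an arbitrary $\mu\in\M(X)$ with $\mu(X\setminus B(r))=0$ and $G\mu\le 1$. Then $\rho(x_0,y)<r$ for $y\in B(r)$, so $g(\rho(x_0,y))\ge g(r)$ because $g$ decreases, hence $G(x_0,y)\ge c\inv g(r)$; therefore $1\ge G\mu(x_0)\ge c\inv g(r)\|\mu\|$, i.e.\ $\|\mu\|\le c\,g(r)\inv$, and taking the supremum over such $\mu$ gives $\kap B(r)\le c\,g(r)\inv$.

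For the lower bound, fix $a>cg(r)$; since $r<R_0$ we have $cg(r)>a_0$, so $a>a_0$ and (iii) applies. Put $A:=\{G(\cdot,x_0)>a\}$. I would first record the elementary facts about $A$: it is open (as $G(\cdot,x_0)\in\W$ is l.s.c.), it contains $x_0$, and $\overline A\subseteq\{G(\cdot,x_0)\ge a\}$ (since $G(\cdot,x_0)$ is continuous off $x_0$); moreover $G(y,x_0)\ge a>cg(r)$ forces $g(\rho(y,x_0))>g(r)$, hence $\rho(y,x_0)<r$, so $\overline A\subseteq B(r)$ and $\overline A$ is compact. Since $a\inv G(\cdot,x_0)>1$ on $A$, the reduction inequalities give $R_1^A\le a\inv R_{G(\cdot,x_0)}^A\le a\inv G(\cdot,x_0)$, so $R_1^A$ is a potential; by (ii) it equals $G\mu$ for some $\mu\in\M(X)$, with $G\mu\le 1$ throughout $X$. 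Since $R_1^A$ is harmonic on $X\setminus\overline A$, the measure $\mu$ is carried by $\overline A\subseteq B(r)$, so $\mu$ is admissible in the definition of $\kapi B(r)$, and it remains only to bound $\|\mu\|$ from below.

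To do so I would pick a point $x_1$ with $\rho(x_1,x_0)=2r$. Then $G(x_1,x_0)\le cg(2r)\le cg(r)<a$, so (iii) gives $R_1^A(x_1)\ge c_1\inv G(x_1,x_0)/a$; on the other hand, for $y\in\overline A\subseteq B(r)$ we have $\rho(x_1,y)\ge 2r-\rho(y,x_0)>r$, hence $G(x_1,y)\le cg(r)$, so $R_1^A(x_1)=G\mu(x_1)=\int_{\overline A}G(x_1,y)\,d\mu(y)\le cg(r)\|\mu\|$. Combining these with $G(x_1,x_0)\ge c\inv g(2r)\ge c\inv c_D\inv g(r)$ (doubling property $g(r)=g\bigl((2r)/2\bigr)\le c_D g(2r)$) yields $\|\mu\|\ge(c^2c_Dc_1\,a)\inv$, whence $\kapi B(r)\ge(c^2c_Dc_1\,a)\inv$. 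Since $a>cg(r)$ was arbitrary, letting $a\downarrow cg(r)$ gives $\kap B(r)=\kapi B(r)\ge(c^3c_Dc_1\,g(r))\inv=c_0\inv g(r)\inv$.

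The ingredients I regard as routine are the standard balayage-space facts used above: a member of $\W$ dominated by a potential is itself a potential; and the reduced function $R_1^A$ is l.s.c.\ and harmonic on $X\setminus\overline A$, while the charge of a potential does not meet the open set on which the potential is harmonic, so that $\supp\mu\subseteq\overline A$. The one genuinely delicate point is the geometric input in the third paragraph: the existence of a comparison point $x_1$ at $\rho$-distance exactly $2r$ from $x_0$ (or, if that distance is not realised, at the least available distance $\ge 2r$), which one must extract from the relative compactness of $B(r)$ for $r<R_0$, the non-compactness of $X$, and the comparability $G\approx g\circ\rho$ together with the mild upper decay of $g$; this is also precisely where the factor $c_D$ enters $c_0$.
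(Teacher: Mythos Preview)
Your argument follows the same route as the paper's: assumption~(iii) yields a lower bound on the reduced function $R_1^{\{G(\cdot,x_0)>a\}}$, which is then converted into a capacity bound. The paper outsources both the upper bound and this conversion step to \cite[Propositions~1.7 and~1.10(b)]{H-liouville-wiener}; you are essentially reconstructing those arguments. Your upper bound is exactly the expected proof.

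For the lower bound there is a small but genuine issue in the third paragraph. With your fallback choice of $x_1$ at the \emph{least available} $\rho$-distance $s\ge 2r$, the estimate $G(x_1,x_0)\ge c\inv g(2r)$ is no longer valid---you only get $G(x_1,x_0)\ge c\inv g(s)$, which may be arbitrarily small---so your displayed bound on $\|\mu\|$ degenerates. The repair is painless: for $y\in\overline A\subset B(r)$ use $\rho(x_1,y)\ge s-r\ge s/2$, hence $G(x_1,y)\le c\,g(s/2)\le cc_D\,g(s)$; combined with $R_1^A(x_1)\ge c_1\inv c\inv g(s)/a$ this gives $\|\mu\|\ge (c^2c_Dc_1\,a)\inv$ for \emph{every} $s\ge 2r$, and letting $a\downarrow cg(r)$ produces the constant $c_0\inv g(r)\inv$. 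With this modification the only surviving requirement is that $X\setminus B(2r)\ne\emptyset$, which is weaker than ``distance exactly $2r$'' but still not a formal consequence of the standing hypotheses by themselves (non-compactness of $X$ is not assumed); this residual point is precisely what the paper hides inside the cited external proposition.
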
 

\begin{proof} The second inequality is part of \cite[Proposition 1.7]{H-liouville-wiener} (and holds for all $r>0$).

To prove the first inequality we fix $0<r<R_0$ and note first that, by (iv), $ G(x,x_0)>c g(r)$ 
implies that $\rho(x,x_0)<r$,  and hence, by (iii),
\[
   R_1^{B(r)}\ge R_1^{\{G(\cdot,x_0)>cg(r)\}}\ge c_1\inv \,\frac{G(\cdot,x_0)}{cg(r)} 
\on \{G(\cdot,x_0)\le c g(r)\}\subset B(r)^c.
\]
By \cite[Proposition 1.10,(b)]{H-liouville-wiener}), this implies that
 $\kap B(r)\ge c_0\inv g(r)\inv$. 
\end{proof}

Let us note that conversely, by \cite[Proposition 1.10,(a)]{H-liouville-wiener}, 
any   estimate $\kap B(r)\ge C\inv  g(r)\inv$ implies that
$ R_1^{B(r)} \ge (c^2c_DC)\inv G(\cdot,x_0)/g(r)$ on $X\setminus B(r)$
(which in turn implies (iii)).

\begin{lemma}\label{shell-lemma}
Let $\b\in (0,1)$ such that $g(r)\le (2cc_0)\inv g(\b r)$ for all $r>0$.        
Then, for every    $0<r<R_0$, 
the shell $S(r):=B(r)\setminus B(\b r)$
satisfies $\kapo S(r)\ge (2c_0)\inv g(r)\inv$.
\end{lemma}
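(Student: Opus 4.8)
The plan is to combine the two-sided estimate for the capacity of balls in Lemma~\ref{cap-balls} with the monotonicity and finite subadditivity of the outer capacity, exploiting that $B(\b r)$ and $S(r)=B(r)\sms B(\b r)$ partition $B(r)$.

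First I would record that $\kapo$ is finitely subadditive. Indeed, if $U_1,U_2$ are open and $\mu\in\M(X)$ satisfies $\mu(X\sms(U_1\cup U_2))=0$ and $G\mu\le 1$, then $\mu=\mu|_{U_1}+\mu|_{U_2\sms U_1}$ is a decomposition into two measures carried by $U_1$ and by $U_2$ respectively, each still having $G$-potential $\le 1$; hence $\|\mu\|\le\kapi U_1+\kapi U_2$, and taking the supremum over such $\mu$ gives $\kapi(U_1\cup U_2)\le\kapi U_1+\kapi U_2$. Passing to open neighbourhoods then yields $\kapo(A_1\cup A_2)\le\kapo A_1+\kapo A_2$ for arbitrary sets $A_1,A_2$. (This is standard and could equally be quoted from \cite{H-liouville-wiener}.)

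Now, since $0<\b r<r<R_0$, Lemma~\ref{cap-balls} applies to both $B(r)$ and $B(\b r)$, and $B(\b r)$ being open we have $\kapo B(\b r)=\kap B(\b r)$. Applying subadditivity and monotonicity to $B(r)=B(\b r)\cup S(r)$ gives
\[
\kapo S(r)\ \ge\ \kap B(r)-\kap B(\b r)\ \ge\ c_0\inv g(r)\inv-c\,g(\b r)\inv .
\]
Finally, the hypothesis $g(r)\le(2cc_0)\inv g(\b r)$ rearranges to $c\,g(\b r)\inv\le\tfrac12 c_0\inv g(r)\inv$, so the right-hand side above is at least $\tfrac12 c_0\inv g(r)\inv=(2c_0)\inv g(r)\inv$, which is the assertion.

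There is essentially no obstacle. The only points requiring a word of care are the subadditivity of $\kapo$ (immediate from the restriction-of-measures argument above) and the observation that $\b r<R_0$ whenever $r<R_0$, which holds because $g$ is decreasing and $B(\b r)\subseteq B(r)$, so the two conditions defining $R_0$ are inherited by the smaller radius.
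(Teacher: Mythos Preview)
Your proof is correct and follows essentially the same route as the paper: both use Lemma~\ref{cap-balls} for the two balls, the subadditivity of $\kapo$ applied to $B(r)=B(\b r)\cup S(r)$, and the hypothesis on $g$ to absorb the $\kap B(\b r)$ term. The only difference is that you spell out the subadditivity argument and the trivial fact $\b r<R_0$, which the paper takes for granted.
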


\begin{proof} By  Lemma \ref{cap-balls} and the subadditivity of $\kapo$, 
\[
         c_0\inv  g(r)\inv \le  \kap B(r)\le \kapo S(r) +\kap B(\b r),
\]
where $\kap B(\b r)\le c g(\b r)\inv \le (2c_0)\inv g(r)\inv$, by Lemma \ref{cap-balls}.
\end{proof}

\section{Control of jumps having Harnack inequalities} 

Let us  observe first that the probabilistic statements and proofs in this section can be replaced by analytic ones
using that, for all Borel   sets $A,B$ in an open set $U$ (where, as usual $\tau_U:= T_\uc$), 
\[
       P^x[X_{T_A}\in B; T_A<\tau_U] =\vx^{A\cup \uc}(B)
\]
(see \cite[VI.2.9]{BH}) and, for all Borel   sets $A,B$ in $X$ with $B\subset A $,
\begin{equation}\label{it-bal}
     \vx^B=\vx^A|_B+ \bigl(\vx^A|_{B^c}\bigr){}^B.
\end{equation} 
(If $x\in B$,  then (\ref{it-bal}) holds trivially. If $x\notin B$ and $u\in \W$,  then, by  \cite[VI.9.1]{BH},
\[
 R_u^B(x)= R_u^B(x)=\int R_u^B\,d\vx^A=\int_B u\,d\vx^A+\int_{B^c} \hat R_u^B\,d\vx^A.)
\]

Next we establish a useful estimate for the hitting of a union of two sets. 

\begin{lemma}\label{suff-hoelder}
Let $U,V$ be  open sets in $X$  such that the exit time $\tau_V$ is finite almost surely.
Let  $A$ be a~Borel set  in $U$, $B$ a~Borel set in $  V\setminus U$ and let $x\in U$ and   
$\kappa\ge 0$ such that $P^y[T_B\ge \tau_V] \ge \kappa  P^x[T_B\ge \tau_V]$ for every 
$y\in U\cap \ov A$. Then
\[
   P^x[T_{A\cup B}\ge\tau_V]\le  \bigl(1-\kappa P^x[T_A< \tau_U]\bigr)  P^x[T_B\ge \tau_V].
\]
\end{lemma}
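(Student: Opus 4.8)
The plan is to derive everything from the strong Markov property at the hitting time $T_A$, on the event that $A$ is reached before $U$ is left. Write $\varphi(z):=P^z[T_B\ge\tau_V]$, so that the hypothesis reads $\varphi(y)\ge\kappa\,\varphi(x)$ for all $y\in U\cap\overline A$ and the claim becomes $P^x[T_{A\cup B}\ge\tau_V]\le(1-\kappa\,P^x[T_A<\tau_U])\,\varphi(x)$. Since $T_{A\cup B}=T_A\wedge T_B$ we have $\{T_{A\cup B}\ge\tau_V\}=\{T_A\ge\tau_V\}\cap\{T_B\ge\tau_V\}$, hence
\[
   P^x[T_{A\cup B}\ge\tau_V]=\varphi(x)-P^x[T_A<\tau_V,\ T_B\ge\tau_V],
\]
and it is enough to prove $P^x[T_A<\tau_V,\ T_B\ge\tau_V]\ge\kappa\,\varphi(x)\,P^x[T_A<\tau_U]$. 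First I would record two elementary facts. Because $B\subset V\setminus U$ is disjoint from $U$ while $X_0=x\in U$, the process cannot meet $B$ before leaving $U$, so $T_B\ge\tau_U$ $P^x$-almost surely. Moreover, on $\{T_A<\tau_U\}$ the right-continuity of the paths and $A\subset U$ give $X_{T_A}\in U\cap\overline A$, and --- since $U\subset V$ (the configuration in which the lemma is applied), so $\tau_U\le\tau_V$ --- also $T_A<\tau_U\le\tau_V$; thus $\{T_A<\tau_U\}\subset\{T_A<\tau_V\}$, and in particular $P^x[T_A<\tau_V,\ T_B\ge\tau_V]\ge P^x[T_A<\tau_U,\ T_B\ge\tau_V]$.

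Next I would apply the strong Markov property at $T_A$ on $\{T_A<\tau_U\}$. There $T_A<\tau_V$ and, by the first fact, $T_A<\tau_U\le T_B$; hence the additivity of exit and hitting times yields $\tau_V=T_A+\tau_V\circ\theta_{T_A}$ and $T_B=T_A+T_B\circ\theta_{T_A}$, so that $\{T_B\ge\tau_V\}\cap\{T_A<\tau_U\}=\{T_A<\tau_U\}\cap\theta_{T_A}^{-1}\{T_B\ge\tau_V\}$. Since $\{T_A<\tau_U\}\in\mathcal F_{T_A}$, the strong Markov property followed by the hypothesis $\varphi(X_{T_A})\ge\kappa\,\varphi(x)$ (legitimate because $X_{T_A}\in U\cap\overline A$ on that event) gives
\[
   P^x[T_A<\tau_V,\ T_B\ge\tau_V]\ \ge\ P^x[T_A<\tau_U,\ T_B\ge\tau_V]\ =\ E^x\bigl[\mathbf 1_{\{T_A<\tau_U\}}\,\varphi(X_{T_A})\bigr]\ \ge\ \kappa\,\varphi(x)\,P^x[T_A<\tau_U],
\]
which is exactly the estimate wanted, and the lemma follows. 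One can run the same computation analytically: applying the balayage identity (\ref{it-bal}) to $B\cup\vc\subset A\cup B\cup\vc$ and using $A\cap B=\emptyset$ reduces the claim to the same hitting estimate for $\int\varphi\,d\vx^{A\cup B\cup\vc}$.

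The step I expect to require the most care is the bookkeeping of the two exit times. One must be sure that reaching $A$ before leaving $U$ happens strictly before $\tau_V$ --- so that $\tau_V$ splits additively at $T_A$ and the post-$T_A$ process ``sees'' the same event $\{T_B\ge\tau_V\}$ --- which is where the nesting $A\subset U\subset V$ enters (equivalently, that $P^x[\tau_V\le T_A<\tau_U]=0$); and one must check that $X_{T_A}$ genuinely lands in $U\cap\overline A$ rather than merely on $\partial U$, which uses $A\subset U$ together with $T_A<\tau_U$. Beyond these points the argument is only the strong Markov property and the trivial monotonicity $T_{A\cup B}\le T_B$.
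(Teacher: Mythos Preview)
Your proof is correct and follows essentially the same route as the paper: decompose $P^x[T_{A\cup B}\ge\tau_V]$ as $P^x[T_B\ge\tau_V]$ minus a correction term, then bound that correction from below via the strong Markov property at $T_A$ on $\{T_A<\tau_U\}$, using that $X_{T_A}\in U\cap\overline A$ there. The paper phrases the shifted event as $X_{T_{B\cup V^c}}=X_{T_{B\cup V^c}}\circ\theta_{T_A}$ rather than splitting $T_B$ and $\tau_V$ separately, but this is only cosmetic. Your explicit remark that $U\subset V$ is needed (so that $\tau_U\le\tau_V$ and $B\cup V^c\subset U^c$) is well taken: the paper's own argument uses this implicitly at the same step, and the lemma is only applied in that configuration.
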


\begin{proof} Defining $E:=[T_A\ge \tau_U]$ and $F:=[T_B\ge \tau_V]$ we have
\[
 P^x[T_{A\cup B} \ge \tau_V]=P^x[T_A\ge \tau_V, T_B\ge \tau_V]\le P^x(E\cap F)= P^x(F)- P^x(F\setminus E).
\]
 Clearly, 
$F \cap [\tau_V<\infty] =[X_{T_{B\cup\vc}}\in \vc]$ and 
$X_{T_{B\cup\vc}}=X_{T_{B\cup\vc}}\circ \theta_{T_A}$ on  $[T_A<\tau_U]$.
Since $X_{T_A}\in U\cap \ov A$ on $[T_A<\tau_U]$, the strong Markov property hence yields that 
\[
     P^x(F\setminus E)= \int_{[T_A<\tau_U]} P^{X_{T_A}}(F) \,dP^x
        \ge  \inf\nolimits_{y\in \ov A\cap U} P^y(F)\cdot P^x[T_A<\tau_U]. 
\]
By our assumption, the proof is completed combining the two estimates.
\end{proof}

As in \cite{HN-harnack}  let us define
\[
\eta:= (2c^3c_D^2)\inv.
\] 
By Remark \ref{g-small}, there exists $0<\a<1/2$ such that 
$g(r)\le \eta g(\a r)$ for every  $r>0$. Taking $\a_1:=\a/2$ we have
\begin{equation}\label{gar}
g((1-2\a_1) r)\le\eta g(\a_1 r) \qquad \mbox{ for every } r>0. \end{equation} 
Of course, (\ref{gar}) still holds if we replace
$\a_1$ by any $\a\in (0,\a_1)$.

Let us recall the following estimate  for the probability of hitting a set before leaving a large ball
(see    \cite[Proposition 3.2]{HN-harnack}). 

\begin{proposition}\label{hit-A}
For all   $r>0$, $0<\a\le \a_1$, 
$x\in  B(2\a r)$ and Borel  sets $A$ in~$B(2\a r)$,
\begin{equation}\label{hitting-est}
    P^x[T_A<\tau_{B(  r)} ]\ge \eta  g(\a r) \kapo(A).
\end{equation} 
\end{proposition}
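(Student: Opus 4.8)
The plan is to estimate $P^x[T_A<\tau_{B(r)}]$ from below by comparing the hitting distribution of $A$ with the capacitary measure of $A$, using the Green function bounds in Assumption~\ref{main-ass}(iv) together with Lemma~\ref{cap-balls}. Concretely, let $\mu$ be a (nearly optimal) measure witnessing $\kapo(A)$, so $\mu$ is carried by (a neighborhood of) $A$, $G\mu\le 1$, and $\|\mu\|$ is close to $\kapo(A)$; by an approximation argument we may assume $\mu$ is carried by $A$ itself and $\|\mu\| = \kapo(A)$ up to an arbitrarily small loss. The potential $p:=G\mu$ is harmonic off the support of $\mu$, hence harmonic on $B(r)\setminus \ov A$, and $p\le 1$ everywhere while $p = R_1^A$-type behaviour holds on $A$. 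The key identity is that for $x\in B(r)$,
\[
P^x[T_A<\tau_{B(r)}] \;=\; \vx^{A\cup B(r)^c}(A) \;\ge\; \frac{1}{\sup_{\ov A}p}\int p\,d\vx^{A\cup B(r)^c}\big|_A,
\]
and one relates $\int p\,d\vx^{A\cup B(r)^c}$ to $p(x)$ via the balayage identity \eqref{it-bal} and the fact that $p$ is a potential: $p = \hat R_p^{A\cup B(r)^c}$ on the relevant set only up to the mass that escapes to $B(r)^c$, which must be controlled.

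The main work, and the main obstacle, is exactly this control of the mass escaping to $B(r)^c$: one needs that a definite fraction of $\vx^{A\cup B(r)^c}$ sits on $A$ rather than on $B(r)^c$. This is where the hypothesis $x\in B(2\a r)$ with $\a\le\a_1$ and the decay inequality \eqref{gar} enter. Since $\mu$ is carried by $A\subset B(2\a r)$, for $y\in B(r)^c$ we have $\rho(y,x_0)\ge r$ while the "source" points are within $2\a r$ of $x_0$, so $\rho(y,\cdot)\gtrsim (1-2\a)r$; hence by Assumption~\ref{main-ass}(iv), $G(y,\cdot)\le c\,g((1-2\a)r)$ there, giving $p(y)=G\mu(y)\le c\,g((1-2\a)r)\,\|\mu\|\le c\,g((1-2\a_1)r)\kapo(A)$. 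On the other hand, $p(x)=G\mu(x)$ is bounded below using $G\ge c\inv g\circ\rho$ and $\rho(x,\cdot)\le 4\a r\le r$ for the bulk of the mass — more carefully, one uses that $p\ge c\inv g(r)\|\mu\|$ is too crude and instead integrates: since $G\mu(x)\geq c\inv\int g(\rho(x,y))\,d\mu(y)$ and $\mu$ concentrates near $x_0$, one gets $p(x)\ge c\inv g(4\a r)\kapo(A)$, or one argues directly via the capacity estimate that $\sup_{\ov A} p \le$ (some constant) since $\kapo(A)\le \kap B(2\a r)\le c\,g(2\a r)\inv$ forces $p\le c^2 g(\cdot)/g(2\a r)$ type bounds. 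The decay gap between $g((1-2\a_1)r)$ and $g(\a_1 r)$ supplied by \eqref{gar}, with the constant $\eta=(2c^3c_D^2)\inv$, is precisely calibrated so that the escaping mass contributes at most half, leaving at least half of the capacitary mass on $A$.

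Having secured that $\vx^{A\cup B(r)^c}(A)\ge \tfrac12\,p(x)/\sup_{\ov A}p$ or an analogous bound, I would assemble the pieces: $p(x)\ge c\inv g(\rho(x,y))$-integrated $\ge$ (a $g(\a r)$-free lower bound in terms of $\kapo(A)$ is not needed) — rather, the clean route is to bound $\sup_{\ov A}p\le 1$ trivially and instead get the lower bound $\int p\,d\vx^{A\cup B(r)^c}\big|_A \ge p(x) - c\,g((1-2\a_1)r)\kapo(A)\cdot P^x[T_A\ge\tau_{B(r)}]$, then use $p(x)\ge c\inv g(\rho(x,\cdot))$ bounds and Lemma~\ref{cap-balls}-style reasoning to see $p(x)\gtrsim g(\a r)\kapo(A)$, and finally $\sup_{\ov A}p\le c\,c_0\,g(2\a r)\inv\cdot$(nothing) — in short, dividing through and using \eqref{gar} to absorb the error term yields $P^x[T_A<\tau_{B(r)}]\ge \eta\,g(\a r)\kapo(A)$. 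The bookkeeping of the constants ($c$, $c_D$, $c_0$, and the choice $\eta=(2c^3c_D^2)\inv$) is delicate but routine once the escaping-mass estimate is in place; since this is quoted from \cite[Proposition 3.2]{HN-harnack}, I would in practice cite that proof, the above being the self-contained sketch of why it holds.
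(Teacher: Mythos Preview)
The paper does not prove this proposition at all; it simply quotes it from \cite[Proposition~3.2]{HN-harnack}. So there is nothing to compare your argument against in the paper itself.

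That said, your strategy is the right one, and it does go through, but the write-up is tangled enough that it obscures this. The clean version is: choose a measure $\mu$ supported in $\ov A\subset B(2\a r)$ with $G\mu\le 1$ and $\|\mu\|$ arbitrarily close to $\kapo(A)$, set $p:=G\mu$. Since $p$ is a potential with $\supp\mu\subset\ov A\subset B(r)$, one has $p(x)=\int p\,d\vx^{\ov A\cup B(r)^c}$ for $x\in B(2\a r)$. Split the integral: on $\ov A$ use $p\le 1$ to get $\int_{\ov A}p\,d\vx^{\ov A\cup B(r)^c}\le P^x[T_A<\tau_{B(r)}]$; on $B(r)^c$ use that $\rho(y,z)\ge(1-2\a)r$ for $y\in B(r)^c$, $z\in\supp\mu$, whence $p\le c\,g((1-2\a)r)\,\|\mu\|\le c\eta\,g(\a r)\,\|\mu\|$ there by \eqref{gar}. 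For the lower bound on $p(x)$, since $\rho(x,z)<4\a r$ one gets $p(x)\ge c\inv g(4\a r)\|\mu\|\ge (cc_D^2)\inv g(\a r)\|\mu\|$ by doubling. Subtracting, $P^x[T_A<\tau_{B(r)}]\ge\bigl((cc_D^2)\inv-c\eta\bigr)g(\a r)\|\mu\|$, and with $\eta=(2c^3c_D^2)\inv$ the bracket equals $(2cc_D^2)\inv\ge\eta$.

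Your detours through $\sup_{\ov A}p$, ``Lemma~\ref{cap-balls}-style reasoning'', and the trailing ``$c\,c_0\,g(2\a r)\inv\cdot$(nothing)'' are unnecessary and should be dropped: neither Lemma~\ref{cap-balls} nor the constant $c_0$ enters, and $p\le 1$ is all that is needed on $\ov A$. The argument is self-contained from Assumption~\ref{main-ass}(iv), the doubling property, and \eqref{gar}.
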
 

After these preparations we arrive at the main result of this section.

\begin{proposition}\label{harnack-jumps} 
Suppose that the constant function $1$ is harmonic and there exist   $0<\a\le \a_1$ and $K>0$ such that, 
for   $0<r<R_0$, 
\begin{equation}\label{harmonic-a}
  \sup h(B(\a r))\le K \inf h(B(\a r)) \quad\mbox{  for every } h\in \H_b^+(B(r)).
\end{equation} 
Let 
$a:=1-(2c_0K)\inv\eta$. 
Then, for all $0<r<R_0$ and $m=0,1,2,\dots$,  
\begin{equation}\label{jump-k}
           \vx^{B(\a^{2m}r)^c }(B(r)^c)\le a^m \qquad  \mbox{ for every } x\in B(\a^{2m} r).
\end{equation}
\end{proposition}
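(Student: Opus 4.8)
The plan is to prove \eqref{jump-k} by induction on $m$, using Lemma~\ref{suff-hoelder} as the iteration engine and Proposition~\ref{hit-A} to produce the constant $\kappa$ needed there. The base case $m=0$ is trivial since $\vx^{B(r)^c}(B(r)^c)=1=a^0$. For the inductive step, suppose \eqref{jump-k} holds for $m$; I want to deduce it for $m+1$. Fix $0<r<R_0$ and $x\in B(\a^{2(m+1)}r)=B(\a^{2m}\cdot \a^2 r)$. The idea is to interpolate one shell at scale $\a^{2m+1}r$ (or $\a^{2m}r$) between the small ball $B(\a^{2(m+1)}r)$ containing $x$ and the large ball $B(r)$, so that hitting that shell before leaving $B(r)$ is both (a) likely, by the harmonicity of $1$ and the Harnack inequality \eqref{harmonic-a}, and (b) forces a "restart" from a point one scale closer to $x_0$, allowing the inductive hypothesis to be applied.

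Concretely, I would set, in the notation of Lemma~\ref{suff-hoelder}, $V:=B(r)$, $U:=B(\a^{2m+1}r)$ (so $\tau_V=\tau_{B(r)}<\infty$ a.s.\ by relative compactness for $r<R_0$), let $B:=B(r)^c$, and let $A$ be (a Borel version of) a shell $S$ inside $U$ at scale $\a^{2m+2}r$ obtained from Lemma~\ref{shell-lemma}, or more directly $A:=B(\a^{2m+2}r)$ itself viewed as a subset of $U$ — one then needs $A\subset U$ and $B\subset V\setminus U$, which holds since $\a<1/2$. The key quantitative inputs are: first, $P^x[T_A<\tau_U]\ge \eta\, g(\a^{2m+2}r)\,\kapo(A)$ from Proposition~\ref{hit-A} applied at radius $\a^{2m+1}r$ (note $x\in B(\a^{2m+2}r)=B(2\a\cdot\tfrac12\a^{2m+1}r)$ fits the hypothesis once $\a\le\a_1<1/2$), combined with the lower capacity bound $\kapo(A)\ge c_0^{-1}g(\a^{2m+2}r)^{-1}$ from Lemma~\ref{cap-balls} (or Lemma~\ref{shell-lemma} with the extra factor $2^{-1}$), giving $P^x[T_A<\tau_U]\ge (2c_0)^{-1}\eta$ up to adjusting constants. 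Second, the function $h(y):=P^y[T_B\ge\tau_V]=P^y[X_{\tau_V}\in B(r), \tau_V<\infty]$ — equivalently $1-\vy^{B(r)^c}(B(r)^c)$ when $y\in B(r)$ — is bounded harmonic on $V=B(r)$ (this uses that $1$ is harmonic, so $h$ is a difference of the harmonic constant and a reduced/harmonic-on-$V$ function), whence by \eqref{harmonic-a} at radius $r$ we get $h(y)\ge K^{-1}h(x)$ for all $y\in B(\a r)\supset \ov A\cap U$; thus Lemma~\ref{suff-hoelder} applies with $\kappa=K^{-1}$.

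Feeding these into Lemma~\ref{suff-hoelder} yields
\[
  P^x[T_{A\cup B}\ge\tau_V]\le\bigl(1-(2c_0K)^{-1}\eta\bigr)\,P^x[T_B\ge\tau_V]=a\cdot P^x[T_B\ge\tau_V].
\]
Now $P^x[T_B\ge\tau_V]=1-\vx^{B(r)^c}(B(r)^c)$, but I instead want to track the complementary quantity $\vx^{B(\a^{2m+2}r)^c}(B(r)^c)=P^x[X_{T_{A'}}\in B(r)^c]$ where $A':=B(\a^{2m+2}r)^c$; the bridge is the decomposition \eqref{it-bal} (or the strong Markov property at $T_A$): starting from $x\in B(\a^{2(m+1)}r)$, the walk must first hit $A$ before $\tau_U$ with high probability, arriving at a point $z\in \ov A\cap U\subset B(\a^{2m}r)$ — wait, one must be careful that $z$ lands in $B(\a^{2m}r)$ so the inductive hypothesis $\vz^{B(\a^{2m}r)^c}(B(r)^c)\le a^m$ is available. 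Here is where the main obstacle lies: \textbf{matching the scales in the interpolation with the scales in the inductive hypothesis so that the "restart" point genuinely lies in the smaller ball}, and correctly relating the "exit into $B(r)^c$" events across the two scales via \eqref{it-bal}. I expect the cleanest bookkeeping is to prove the equivalent statement $1-\vx^{B(\a^{2m}r)^c}(B(r)^c)\ge 1-a^m$, i.e. $P^x[\text{exit }B(r)\text{ from inside }B(\a^{2m}r)]\ge 1-a^m$, and to run the induction by splitting this "good exit" probability at the first hitting of the intermediate shell: the one-step factor $(2c_0K)^{-1}\eta$ coming from Proposition~\ref{hit-A}/Lemma~\ref{suff-hoelder}, and the remaining factor $1-a^m$ from the inductive hypothesis applied at the restart point, so that $1-a^{m+1}=(1-a)+a(1-a^m)$ assembles exactly. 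Once the scale matching is pinned down, the remaining steps are the routine capacity and harmonicity estimates already established.
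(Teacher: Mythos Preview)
Your proposal has a genuine gap: the choice $B:=B(r)^c$ is inadmissible in Lemma~\ref{suff-hoelder}, which requires $B\subset V\setminus U$. Worse, with $V=B(r)$ one has $T_{B(r)^c}=\tau_V$, so $P^y[T_B\ge\tau_V]\equiv 1$ and the lemma becomes vacuous; this is also why your ``harmonic function'' $h$ on $B(r)$ is identically $1$ and gives no information. The correct choice is $B:=V\setminus U$, a \emph{shell}, so that $P^y[T_B\ge\tau_V]=\ve_y^{U^c}(V^c)$ is precisely the jump-over probability one wants to iterate. This function is harmonic on $U$ (not on all of $V$), and therefore Harnack must be applied at the scale of $U$, not at scale $r$.

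Once $B$ is the shell, the whole ``restart at $z$'' detour --- and with it the scale-matching obstacle you flag --- evaporates: take $U:=B_{2m-2}$, $V:=B_0$, $B:=V\setminus U=S_{2m-2}$, and let $A$ be the \emph{shell} $B_{2m-1}\setminus B_{2m}$ (not the ball $B_{2m}$; with the ball one would have $x\in A$ and the needed containment $A\cup B\subset S_{2m}$ would fail). Then Proposition~\ref{hit-A} and Lemma~\ref{shell-lemma} give $P^x[T_A<\tau_U]\ge(2c_0)^{-1}\eta$ for $x\in B_{2m-1}$, and Harnack on $U$ yields $\kappa=K^{-1}$, so Lemma~\ref{suff-hoelder} produces
\[
P^x[T_{S_{2m}}\ge\tau_V]\le P^x[T_{A\cup B}\ge\tau_V]\le a\,P^x[T_B\ge\tau_V]=a\,P^x[T_{S_{2m-2}}\ge\tau_V].
\]
The right-hand side is the same quantity for $m-1$ at the \emph{same} point $x$ (which lies in $B_{2m}\subset B_{2m-2}$), so the induction closes immediately via the identity $\vx^{B_{2m}^c}(B_0^c)=P^x[T_{S_{2m}}\ge\tau_V]$, with no need to control where the process lands.
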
 

\begin{proof} Of course, (\ref{jump-k}) holds trivially if $m=0$. Let us fix  $0<r<R_0$. For 
 $m=0,1,2,\dots$, we define
\[
B_m:=B(\a^m r) \und S_m:=B_0\setminus B_m.
\]
Then, for every $x\in B_m$,
\begin{equation}\label{hit-equiv}
           \vx^{B_m^c }(B_0^c)=P^x[T_{S_{2m}}\ge \tau_{B_0}].
\end{equation} 
For the moment, let us fix $m\in\nat$ and define
\[
A:=B_{2m-1}\setminus B_{2m},\quad U:=B_{2m-2}, \quad   V:=B_0, \quad 
B:=V\setminus U=S_{2m-2}.
\]  
By Proposition \ref{hit-A} and Lemma \ref{shell-lemma},   for all
$x\in B_{2m-1}$,  
\[
    P^x[T_A<\tau_U]\ge \eta g(\a^{2m-1}r) \kapo (A)\ge (2 c_0)\inv \eta.
\] 
The function $y\mapsto P^y[T_B\ge \tau_V]=\ve_y^{B\cup \vc}(\vc)$ is harmonic on $U$,
by \cite[VI.2.10]{BH}, and hence,
by (\ref{harmonic-a}),
\[
               P^y[T_B\ge \tau_V]\le K P^x[T_B\ge \tau_V]\qquad \mbox{ for all }y\in \ov A.
\]
So, by Lemma \ref{suff-hoelder}, for every $x\in B_{2m}$,
\[
          P^x[T_{S_{2m}}\ge \tau_V]\le     P^x[T_{A\cup B}\ge \tau_V]\le aP^x[T_B\ge \tau_V]
= a P^x[T_{S_{2m-2}}\ge \tau_V].
\]
In view of (\ref{hit-equiv}),  the proof of (\ref{jump-k}) is completed by induction.
\end{proof}

\section{H\"older continuity}

   In addition to Assumption \ref{main-ass} and   harmonicity of the constant function $1$, let us  suppose that there exist
   $a_0,\g\in (0,1)$ and $C_0\ge 1$ such that, for all $0<r<R_0$, $m\in\nat$ and $ x\in B(\g^{m} r)$,
\begin{equation}\label{jump-m}
           \vx^{B(\g^{m}r)^c }(B(r)^c)\le C_0 a_0^m.
\end{equation}

\begin{remarks}\label{suff-jump}{\rm
1.  By Proposition \ref{jump-k}, (\ref{jump-m}) holds if we have the 
Harnack inequalities (\ref{harmonic-a}). 
In \cite[Theorem 4.1]{kass-mim-jump}, H\"older continuity is obtained
assuming (more strongly) a version of Harnack inequalities for bounded  functions which are harmonic and positive 
on $B(r)$, but may be  negative on the complement.

2.  If $\mathfrak X$ is a diffusion, that is, if the reduced measures $\ve_x^{\uc}$
for open sets $U$ and $x\in U$ are supported by the  boundary of $U$, then (\ref{jump-m})
holds trivially.

3.  It is known that (\ref{jump-m}) holds for  many  L\'evy processes  (see \cite[Corollary 2]{grzywny}).   
}
\end{remarks}

To obtain a suitable H\"older exponent $\b$ we first 
 define  $\delta:=(12 c_0)\inv \eta$ and fix $b\in (1,\sqrt{3/2})$, $a\in (0,1/3)$  such that 
\begin{equation}\label{def-ab} 
b^2(1-3\delta)<1-2\delta \und ab^3(1-ab)\inv <\delta.
\end{equation} 
Then we choose $k\in\nat$ such that $a_0^k<C_0\inv a$, and define 
\begin{equation}\label{def-b}
  \a:=\a_1\wedge \g^k, \qquad\quad      \b:= (\ln b)\cdot (\ln \frac 1\a)\inv.
\end{equation} 
 By (\ref{jump-m}) and our choice of $\a$, for all $0<r<R_0$, $m=0,1,2,\dots $ and $x\in B(\a^m r)$,
\begin{equation}\label{jump-mn} 
                \vx^{B(\a^{m}r)^c }(B(r)^c)\le   a^m.
\end{equation}

\begin{theorem}\label{hoelder}
For all $0<r<R_0$, $        h\in \H_b(B(r))$ and $x\in B(r)$, 
\begin{equation}\label{hoelder-formula}
                     |h(x)-h(x_0)|\le 3 \|h\|_\infty \biggl(\frac{\rho(x,x_0)} {\a r}\biggr)^\b .
              \end{equation} 
\end{theorem}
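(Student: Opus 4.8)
The plan is to prove the Hölder estimate by iterating a single decay step: if $h$ is bounded and harmonic on a large ball, then the oscillation of $h$ over a ball shrinks by a fixed factor $b^{-1}<1$ when the radius shrinks by the factor $\a$. Fix $0<r<R_0$ and $h\in\H_b(B(r))$; by rescaling we may assume $\|h\|_\infty\le 1$ and, replacing $h$ by $h-h(x_0)$ (at the cost of doubling the sup-norm bound), it suffices to bound $|h(x)|$ for $x\in B(\a^{m+1} r)\setminus B(\a^{m+2} r)$ by a constant times $b^{-m}$; summing the geometric loss and translating $b^{-m}$ into $(\rho(x,x_0)/\a r)^\b$ via the definition $\b=(\ln b)/(\ln 1/\a)$ gives (\ref{hoelder-formula}) with the constant $3$, after checking the arithmetic against the choices of $b$, $a$, $\delta$ in (\ref{def-ab}).

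The core is the one-step claim: writing $r_m:=\a^m r$, $M_m:=\sup h(B(r_m))$, $m_m:=\inf h(B(r_m))$ and $\mathrm{osc}_m:=M_m-m_m$, I want $\mathrm{osc}_{m+1}\le b^{-1}\mathrm{osc}_m$ (or a comparable statement with a correction term controlled by the jump estimate). To get it, fix $m$ and decompose: for $x\in B(r_{m+1})$ use the harmonic mean-value property on a ball $V\subset\subset B(r_m)$ to write $h(x)=\int h\,d\vx^{\vc}$, then split $\vx^{\vc}=\vx^{\vc}|_{B(r_m)}+\vx^{\vc}|_{B(r_m)^c}$. On $B(r_m)$ the integrand lies in $[m_m,M_m]$; on $B(r_m)^c$ it lies in $[-1,1]$, and its mass is $\vx^{B(r_m)^c}(B(r_m)^c)$, which by (\ref{jump-mn}) is at most $a^{m}$ — here is where the polynomial jump estimate (\ref{jump-m})/(\ref{jump-mn}) enters, and this is the step that forces the specific $\a$ from (\ref{def-b}). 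Thus $h(x)$ is a convex combination of a value in $[m_m,M_m]$ with weight $\ge 1-a^m$ and a value in $[-1,1]$ with weight $\le a^m$; taking sup and inf over $x\in B(r_{m+1})$ and subtracting yields $\mathrm{osc}_{m+1}\le (1-a^m)\,\mathrm{osc}_m+2a^m$ — but this alone is not a true geometric contraction, so a second ingredient is needed.

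The missing contraction comes from comparing $h$ on $B(r_{m+1})$ against \emph{both} its sup and its inf over $B(r_m)$ simultaneously and using that the exit measure assigns non-trivial mass to the \emph{bulk} of $B(r_m)$, not just to a thin layer. Concretely, apply the above to the two nonnegative harmonic functions $M_m - h$ and $h - m_m$ on $B(r_m)$: for a suitable intermediate radius (this is where $b\in(1,\sqrt{3/2})$ and the first inequality in (\ref{def-ab}), $b^2(1-3\delta)<1-2\delta$, are used, together with $\delta=(12c_0)\inv\eta$ and Proposition~\ref{hit-A} to produce a uniform lower bound $\ge 3\delta$ on the probability that the process, started in $B(r_{m+1})$, hits the region where $h$ is already known to be, say, $\le$ the midpoint before leaving $B(r_m)$), one of these two functions has a value $\le (1-\delta)\mathrm{osc}_m$ somewhere, and harmonicity propagates this to all of $B(r_{m+1})$ modulo the jump error $\le 2a^m$. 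This gives $\mathrm{osc}_{m+1}\le (1-\delta)\mathrm{osc}_m + 2a^m$, and then the second condition in (\ref{def-ab}), $ab^3(1-ab)\inv<\delta$, is exactly what makes the recursion $t_{m+1}\le (1-\delta)t_m + 2a^m$ close up to give $t_m\le C b^{-m}$ with the right constant. Summing $\mathrm{osc}_j$ along the chain $B(r_{j})\supset B(r_{j+1})$ bounds $|h(x)-h(x_0)|$, and feeding in $b^{-m}=\a^{m\b}\ge(\rho(x,x_0)/r)^\b\cdot\a^{-\b}$ for $x\in B(r_m)$ completes the estimate.

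The main obstacle is the second ingredient: producing a genuine geometric contraction rather than the weak bound $\mathrm{osc}_{m+1}\le(1-a^m)\mathrm{osc}_m+2a^m$. This requires extracting, from Proposition~\ref{hit-A}, a lower bound independent of $m$ on the exit measure's mass on a fixed-proportion sub-ball of $B(r_m)$, and arranging the three constants $b$, $a$, $\delta$ (already pinned down in (\ref{def-ab})) so that the propagation of the one-sided bound survives the jump error at every scale. The bookkeeping that turns $b^{-m}$ into the clean prefactor $3$ in (\ref{hoelder-formula}) is routine once the recursion is in place.
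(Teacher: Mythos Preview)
Your overall architecture is right --- oscillation decay via a hitting-the-sublevel-set argument plus the jump estimate --- and it matches the paper's. But the recursion you write down, $\mathrm{osc}_{m+1}\le(1-\delta)\,\mathrm{osc}_m + 2a^m$, is not what the argument actually produces, and this is a genuine gap, not just bookkeeping.

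The trouble is the ``jump error'' term. When you sweep $\ve_x$ (for $x\in B(r_{m+1})$) onto $A\cup B(r_m)^c$, the measure $\mu$ sits on $A$ and on $B(r_m)^c$. Your lower bound $\mu(A)\ge 3\delta$ (the paper gets $6\delta$) via Proposition~\ref{hit-A} is fine. But then $\mu(B(r_m)^c)$ is of order $1-6\delta$, \emph{not} of order $a^m$. The jump estimate (\ref{jump-mn}) bounds only $\ve_x^{B(r_m)^c}(B(r)^c)$ --- the mass that lands outside the \emph{original} ball $B(r)$ --- not the mass that lands outside $B(r_m)$. (Your first-attempt sentence ``its mass is $\ve_x^{B(r_m)^c}(B(r_m)^c)$, which by (\ref{jump-mn}) is at most $a^m$'' misreads (\ref{jump-mn}) in exactly this way.) So on the region $B(r)\setminus B(r_m)$ you have substantial $\mu$-mass and no bound on $h-h(y)$ better than the oscillation of $h$ over that region; the global bound $|h|\le 1$ would give an error of order $2(1-6\delta)$, which kills the contraction.

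What actually closes the argument is \emph{strong} induction: decompose $B(r_m)^c$ into shells $B_j\setminus B_{j+1}$ for $0\le j\le m-1$ together with $B(r)^c$, bound $h-h(y)\le \mathrm{osc}_j$ on the $j$-th shell using the inductive hypothesis at \emph{every} previous scale, and bound $\mu(B_{j+1}^c)\le a^{m-j-1}$ via (\ref{jump-mn}) applied with varying outer radius. The resulting tail is a geometric sum $\sum_j 3b^{-j}a^{m-j-1}$, and this is precisely what the second condition $ab^3(1-ab)^{-1}<\delta$ in (\ref{def-ab}) is calibrated to absorb. So the correct object is not a one-step recursion in $\mathrm{osc}_m$ alone; it is the direct inductive statement $\mathrm{osc}_n\le 3b^{-n}$, proved using all previous $\mathrm{osc}_j$. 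Once you make this change, your plan becomes the paper's proof.
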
 

\begin{proof} Except for using  capacity instead of a certain measure, we may follow
rather closely the proof of \cite[Theorem 1.4]{kass-mim-intrinsic}.  

Let $0<r<R_0$  and $h\in\H_b(B)$,  $\|h\|_\infty=1$. 
For $n=0,1,2,\dots$  let 
\[
          B_n:= B(\a^n r), \quad m_n:=\inf h(\ov B_n), \quad M_n:=\sup h(\ov B_n).
\]
We claim that 
\begin{equation}\label{hoelder-ess}
            M_n-m_n\le s_n:=3 b^{-n}. 
\end{equation} 
Clearly, (\ref{hoelder-ess})  holds trivially for $n=0,1,2$,  since $M_n-m_n\le 2$ and $b^2<3/2$.
  Suppose that (\ref{hoelder-ess}) holds 
for some $n\in\nat$, $n\ge 2$. Given $\ve>0$, we may choose points $x,y\in B_{n+1}$ such that
\[
          h(x)>M_{n+1}-\ve \und   h(y) < m_{n+1}+\ve. 
\]
We intend to prove that
\[
            h(x)-h(y)\le   s_{n+1}. 
\]
To that end  we may assume without loss of generality that the closed set
\[
       A:= \biggl\{z\in \ov B_{n+1}\colon h(z)\le \frac {m_n+M_n}2\biggr\}
\]
satisfies $\kapo A\ge (1/2) \kap \ov B_{n+1}$. 
Indeed, otherwise we replace $h$ by $-h$ and exchange the roles of $x$ and $y$. 
Let
\[
           \mu:=\vx^{A\cup B_n^c}.
\]
Since $1, h\in \H_b(B_0)$, we know that $\mu$ is a probability measure and
\begin{equation}\label{mu-equation}
           h(x)-h(y)=\int  (h-h(y))\,d\mu.
\end{equation} 
The measure $\mu$ is supported by $  A\cup B_n^c$.
Clearly,
\[
    \int_A (h-h(y))\,d\mu\le \bigl(\frac{m_n+M_n}2-m_n\bigr) \mu(A)\le\frac 12 s_n \mu(A)\le
\frac 12 s_{n-1} \mu(A),
\]
where, by  Proposition \ref{hit-A} and Lemma \ref{cap-balls},
\[
             \mu(A) \ge \eta g(\a^{n+1} r) \kapo A \ge \eta g(\a^{n+1} r) \cdot  (2 c_0)\inv g(\a^{n+1} r)\inv =6 \delta. 
\]
Since $\mu(B_n^c)=1-\mu(A)$, we have
\[
           \int_{B_{n-1}\setminus B_n} (h-h(y)) \,d\mu \le s_{n-1} (1-\mu(A)).
\]
Combining the three estimates we obtain that
\begin{equation}\label{mu-ess-2}
\int_{A\cup (B_{n-1}\setminus B_n)} (h-h(y)\,d\mu \le s_{n-1} (1-\frac 12 \mu(A)) \le s_{n+1} b^{2} (1-3\delta) .
\end{equation} 
Finally, 
\begin{equation}\label{mu-ess-3}
          \int_{B_{n-1}^c} (h-h(y))\,d\mu\le  2\mu(B_0^c) +\sum\nolimits_{j=0}^{n-2} s_j  \mu(B_j \setminus B_{j+1}). 
\end{equation} 
By (\ref{it-bal}) and (\ref{jump-mn}),  for every $1\le m \le n$,
\[ 
         \mu(B_m^c)\le \vx^{B_n^c}(B_m^c)\le a^{n-m}.
\]
So, by (\ref{def-ab}),  $2\mu(B_0^c)\le 2a^n\le s_{n+1} \delta$   (note that $ab<\delta$) 
and 
\[
      \sum\nolimits _{j=0}^{n-2} s_j \mu(B_j\setminus B_{j+1}) \le \sum\nolimits _{j=0}^{n-2} s_j \mu( B_{j+1}^c) 
\le     3  \sum\nolimits _{j=0}^{n-2} b^{-j} a^{n-(j+1)}
                =  s_{n+1} s, 
\]
where, 
\[     s =b^{n+1} \sum\nolimits_{j=0}^{n-2} b^{-j} a^{n-(j+1)}=b^2 \sum\nolimits_{j=0}^{n-2}   (ab)^{n-(j+1)}
\le   \frac{a b^{3}} { 1-ab} \le \delta.
\]
Having (\ref{mu-equation}), the estimates  (\ref{mu-ess-2}) and (\ref{mu-ess-3})
hence yield  that $h(x)-h(y)\le s_{n+1}$.
Thus $M_{n+1}-m_{n+1}\le s_{n+1}$, since $\ve>0$ was arbitrary, and the inductive step for (\ref{hoelder-ess})
 is finished.

 Given $x\in B_0\setminus \{x_0\}$, there exists $n\ge 0$ 
such that $x\in B_n\setminus B_{n+1}$, and therefore, by~(\ref{def-b}), 
\[
               |h(x)-h(x_0)|\le  3b^{-n} = 3\a^{ n\b} \le 3 \biggl(\frac{\rho(x,x_0)}{\a r}\biggr)^\b
\]
completing the proof.
\end{proof}

\bibliographystyle{plain} 
\def\cprime{$'$} \def\cprime{$'$}

{\small \noindent 
Wolfhard Hansen,
Fakult\"at f\"ur Mathematik,
Universit\"at Bielefeld,
33501 Bielefeld, Germany, e-mail:
 hansen$@$math.uni-bielefeld.de}

\end{document}